\newcommand{\dd}{\mathrm{d}}
\global\let\tikz@ensure@dollar@catcode=\relax
\setlist{
  listparindent=\parindent,
  parsep=0pt,
}
\numberwithin{equation}{section}
\theoremstyle{plain} 
\newtheorem{theorem}{Theorem}[section]
\newtheorem{Corollary}[theorem]{Corollary}
\newtheorem*{theorem*}{Theorem}
\theoremstyle{definition} 
\newtheorem{Example}[theorem]{Example}
\definecolor{darkmagenta}{rgb}{0.5,0,0.5}
\definecolor{darkgreen}{rgb}{0,0.6,0}
\definecolor{darkblue}{rgb}{0,0,0.6}
\definecolor{darkred}{rgb}{0.8,0,0}
\definecolor{mellow}{rgb}{.847, 0.72, 0.525}
\begin{document}

\title{A Markov jump process associated with the matrix-exponential distribution}

\author{Oscar Peralta\footnote{The University of Adelaide, School of Mathematical Sciences, SA 5005, Australia, \texttt{oscar.peraltagutierrez@adelaide.edu.au}}
}
\date{}

\maketitle
\begin{abstract}
Let $f$ be the density function associated to a matrix-exponential distribution of parameters $(\bm{\alpha}, T,\bm{s})$. By exponentially tilting $f$, we find a probabilistic interpretation which generalises the one associated to phase-type distributions. More specifically, we show that for any sufficiently large $\lambda\ge 0$, the function $x\mapsto \left(\int_0^\infty e^{-\lambda s}f(s)\dd s\right)^{-1}e^{-\lambda x}f(x)$ can be described in terms of a Markov jump process whose generator is tied to $T$. Finally, we show how to revert the exponential tilting in order to assign a probabilistic interpretation to $f$ itself. 
\end{abstract}
\section{Introduction}
% \begin{theorem*}\label{th:phaselike}
% Let $f:A\rightarrow\mathds{R}_+$ be $\mathcal{A}$-measurable and let $u\ge 0$. Then,
% \begin{align*}\mathds{E}\left(\exp \left\{-u\sum_{(T,\bm{X})\in\Gamma} f(\bm{X})\right\}\right)=\bm{\alpha}\left(\bm{I}-\bm{M}_f(u)^{-1} \bm{T}\right)^{-1}\bm{M}_f(u)^{-1}\bm{t},
%  \end{align*}
%  where $\bm{M}_f(u):=\mbox{diag}(m_{f,i}(u): i\in\mathcal{E})$ and 
%  \[m_{f,i}(u):=\int_E \left(1-e^{-u f(x)}\right)\mu_i(\dd x).\]
% \end{theorem*}
A phase-type distribution corresponds to the law of $Y:=\inf\{t\ge 0: J_t=\star\}$ where $\{J_t\}_{t\ge 0}$ is a Markov jump process with state space $\{1,\dots,p\}\cup \{\star\}$, with $\{1,\dots,p\}$ assumed to be transient states and $\{\star\}$ absorbing. If $\{J_t\}_{t\ge 0}$ has a block-partitioned initial distribution $(\bm{\pi}, 0)$ and intensity matrix given by
\begin{equation}\label{eq:MarkovPH1}\left[\begin{matrix}A&\bm{b}\\\bm{0}&0\end{matrix}\right]\quad\mbox{with}\quad\bm{b}=-A\bm{1},\end{equation}
where $\bm{0}$ represents a $p$-dimensional row vector of $0$'s and $\bm{1}$ a $p$-dimensional column vector of $1$'s, then we say that the phase-type distribution is of parameters $(\bm{\pi}, A)$. Via simple probabilistic arguments, it can be shown that the density function of a phase-type distribution of parameters $(\bm{\pi},A)$ is of the form
\begin{equation}\label{eq:PHdensity} g(x)=\bm{\pi}e^{Ax}\bm{b}, \quad x\ge 0. \end{equation}
Indeed, the vector $\bm{\pi}e^{Ax}$ yields the probabilities of $\{J_t\}_{t\ge 0}$ being in some state $\{1,\dots, p\}$ at time $x$, and $\bm{b}$ corresponds to the intensity vector of an absorption happening immediately after. Phase-type distributions were first introduced in \cite{Jensen:2011bz} with the aim of constructing a robust and tractable class of distributions on $\mathds{R}_+$ to be used in econometric problems. A more comprehensive study of phase-type distributions was carried on by Neuts \cite{NeutsPH,Neuts}, whose work popularized their use in more general stochastic models.

On the other hand, a \emph{matrix-exponential distribution} of dimension $p\ge 1$ is an absolutely continuous distribution on $(0,\infty)$ whose density function can be written as
\begin{equation}\label{eq:MEdensity} f(x)=\bm{\alpha}e^{Tx}\bm{s},\quad x\ge 0,\end{equation}
where $\bm{\alpha}=(\alpha_1,\dots,\alpha_p)$ is a $p$-dimensional row vector, $T=\{t_{ij}\}_{i,j\in\{1,\dots,p\}}$ is a $(p\times p) $-dimensional square matrix, and $\bm{s}=(s_1,\dots,s_p)^\intercal$ is a $p$-dimensional column vector, all with complex entries. If the dimension need not be specified, we refer to such a distribution simply as \emph{matrix-exponential}. It follows from (\ref{eq:PHdensity}) and (\ref{eq:MEdensity}) that the class of phase-type distributions is a subset of those that are matrix-exponential, with the inclusion being strict (see \cite{o1990characterization} for details on the latter). 

{Matrix-exponential distributions were first studied in \cite{Cox:2008hh,Cox:1955cc} through the concept of complex-valued transition probabilities. More precisely, it was shown that certain systems with complex-valued elements can be formally studied by analytical means without assigning a specific physical interpretation to their components. While their method provided mathematical rigour to systems ``driven'' by complex-valued intensity matrices, it failed to provide a physical meaning to each individual component, as opposed to the case of Markov jump processes with genuine intensity matrices.} Later on, it was proved in \cite{o1990characterization,bladt2003matrix} that matrix-exponential distributions have an interpretation in terms of a Markov process with continuous state space, as opposed to the finite-state-space one that phase-type distributions enjoy. {Even after the previous physical interpretations of matrix-exponential distributions were discovered,  properties of this class of distributions are still not as well understood as they are for its phase-type counterpart. One of the main reasons for this is that continuous state space processes are more difficult to handle, so that studying matrix-exponential distributions by physical means requires a more sophisticated framework. For example, this is the case in \cite{asmussen1999point,bean2010quasi,bean2021rapmodulated} where the theory of piecewise-deterministic Markov processes is used to study models with matrix-exponential components. Thus, having a finite-state system interpretation for matrix-exponential distributions available may potentially lead to the discovery of new properties, as it has traditionally been the case for phase-type distributions.}
%In \cite{mocanu1999sparse} a probabilistic interpretation for $f$ was given in terms of a finite state space Markov jump process whose initial row vector may or may not contain negative elements: such a construction relies on matching the Laplace transform of $f$ with an affine mixture of Laplace transforms associated to the exponential, Erlang or feedback Erlang distributions. 

{In this paper we give a physical interpretation to \emph{each element} of the parameters $(\bm{\alpha}, T, \bm{s})$ satisfying:
\begin{enumerate}
\item[\textbf{A1}.] The elements of $\bm{\alpha}$, $T$ and $\bm{s}$ are real,
\item[\textbf{A2}.] The dominant eigenvalue of $T$, denoted $\sigma_0$, is real and strictly negative.
\end{enumerate}
Since it can be shown that for a given matrix-exponential density of the form (\ref{eq:MEdensity}) the parameters $(\bm{\alpha}, T, \bm{s})$ can be chosen is such a way that \textbf{A1} and \textbf{A2} hold (see \cite{bladt1992renewal}), the interpretation that we develop essentially completes the picture laid out in \cite{Cox:2008hh,Cox:1955cc}. Our method, inspired by the recent work in \cite{vollering2020markov}, provides a transparent interpretation of $(\bm{\alpha}, T, \bm{s})$ in terms of a finite-state Markov jump process.} To do so, we employ the technique known as \emph{exponential tilting}, which means that we focus on the density proportional to $e^{-\lambda \cdot}f(\cdot)$ for large enough $\lambda>0$. {After we perform this transformation, we construct a Markov jump process on a finite state space formed by two groups: the \emph{original states} and the \emph{anti-states}, the latter being a copy of the former. Heuristically, jumps within the set of original states or within the set of anti-states occur according to the off-diagonal nonnegative ``jump intensities'' of $T$, while jumps between the original and the anti-states occur according to the negative ``jump intensities'' of $T$. Eventual absorption or termination happens, and each realization ``carries'' a positive or negative sign depending only on its initial and final state. Our main contribution is showing that this mechanism yields the exponentially-tilted matrix-exponential distribution, and by reverting the exponential tilting, providing some probabilistic insight of the original matrix-exponential distribution as well.}

The structure of the paper is as follows. In Section \ref{sec:background} we provide a brief exposition on exponential tilting and how it affects the representation of a matrix-exponential distribution. In Section \ref{sec:main} we present our main results, Theorem \ref{th:main3} and Corollary \ref{cor:alt2}, where we give a precise interpretation of an exponentially-tilted matrix-exponential density in terms of a Markov jump process. Finally, in Section \ref{sec:recovering} we provide methods to recover formulae and probabilistic interpretations for any matrix-exponential distribution based on the results on their exponentially-tilted version.
\section{Preliminaries}\label{sec:background}
Exponential tilting, also known as the \emph{Escher transform}, is a technique which transforms any probability density function $f$ with support on $[0,\infty)$ into a new probability density function $f_\lambda$ defined by
\[f_\lambda(x) = \frac{e^{-\lambda x} f(x)}{\int_{0}^\infty e^{-\lambda r} f(r)\dd r}, \quad x\ge 0,\]
where $\lambda\ge 0$ is the \emph{tilting rate}. The use of exponential tilting goes back at least to \cite{escher1932probability}, where it was used to build upon Cr\'amer's classical actuarial models \cite{Cramer:2013bs}. Later on, the exponential tilting method played a prominent role in the theory of option pricing \cite{gerber1994option}.

The exponentially-tilted version of a matrix-exponential distribution has a simple form which happens to be matrix-exponential itself. To see this, notice that if $f$ is of the form (\ref{eq:MEdensity}), then
\begin{align*}
\int_0^{\infty}e^{-\lambda r}f(r)\dd r & = \int_0^{\infty}e^{-\lambda r}(\bm{\alpha}e^{Tr}\bm{s})\dd r = \bm{\alpha}(\lambda I-T)^{-1}\bm{s},
\end{align*}
where we used the fact that $T-\lambda I$ has eigenvalues with strictly negative real parts and thus $e^{(T-\lambda I) r}$ vanishes as $r\rightarrow\infty$. Thus,
\begin{align}
f_\lambda(x)=\frac{e^{-\lambda x} (\bm{\alpha}e^{Tx}\bm{s})}{\bm{\alpha}(\lambda I-T)^{-1}\bm{s}}=\left(\frac{\bm{\alpha}}{\bm{\alpha}(\lambda I-T)^{-1}\bm{s}}\right)e^{(T-\lambda I)x}\bm{s},\quad x\ge 0,\label{eq:flambda1}
\end{align}
implying that $f_\lambda$ corresponds to the density function of a matrix-exponential distribution of parameters $\left(\tfrac{\bm{\alpha}}{\bm{\alpha}(\lambda I-T)^{-1}\bm{s}}, T-\lambda I, \bm{s}\right)$.

Recall that the parameters $(\bm{\alpha},T,\bm{s})$ need not have a probabilistic meaning in terms of a finite state space Markov chain, as opposed to the parameters associated to phase-type distributions. For instance, the parameters
\begin{equation}\label{eq:MEex1}
\bm{\alpha}=\left(1,0, 0\right), \quad T=\left[\begin{array}{rrr}
-1 & -1 & 2/3 \\
1 & -1 & -2/3 \\
0 & 0 & -1
\end{array}\right], \quad \bm{s}=\left[\begin{array}{r}
4/3 \\
2/3 \\
1
\end{array}\right]
\end{equation}
yield a valid matrix-exponential distribution whose density function is given by $f(x)=\tfrac{2}{3}e^{-x}(1+\cos (x))$, and where the dominant eigenvalue of $T$ is $-1$  (see \cite[Example 4.5.21]{bladt2017matrix} for details). In the following section we show how to assign a probabilistic meaning to the exponentially-tilted version of (\ref{eq:MEex1}), and more generally to those attaining properties \textbf{A1} and \textbf{A2}, in terms of a finite-state Markov jump process.
\section{Main results}\label{sec:main}
Let $(\bm{\alpha},T,\bm{s})$ be parameters associated to a $p$-dimensional matrix-exponential distribution which attain properties \textbf{A1} and \textbf{A2}. For $1\le i,j\le p$ denote by $t_{ij}$ the $(i,j)$-entry of $T$, and denote by $s_i$ the $i$-th entry of $\bm{s}$. For $\ell\in\{+,-\}$, define the $(p\times p)$-dimensional matrix $T^{\ell}=\{t_{ij}^{\ell}\}_{1 \le i,j\le p}$ and the $p$-dimensional column vector $\bm{s}^{\ell}=(s^\ell_1,\dots, s^\ell_p)^\intercal$ where
\begin{align*}
t_{ij}^\pm&=\max\{0,\pm t_{ij}\}\quad \forall\,i\neq j,\\
t_{ii}^\pm&=\pm \min\{0, \pm t_{ii}\}\quad\forall\, i,\mbox{ and}\\
s_i^\pm&=\max\{0,\pm s_i\}\quad\forall\, i.
\end{align*}
It follows that $T^+$ has nonnegative off-diagonal elements and nonpositive diagonal elements, $T^-$ is a nonnegative matrix, $\bm{s}^+$ and $\bm{s}^-$ are nonnegative column vectors, $T=T^+-T^-$ and $\bm{s}=\bm{s}^+-\bm{s}^-$. Now, let 
\[\lambda_0=\min\left\{r\ge 0: \sum_{j=1}^p s^+_{ij} + s^-_{ij} + s^+_i+s^-_i - \lambda \le 0\mbox{ for all }1\le i\le p\right\}.\]
For some fixed $\lambda\ge\lambda_0$, consider a (possibly) terminating Markov jump process $\{\varphi_t^\lambda\}_{t\ge 0}$  driven by the block-partitioned subintensity matrix
\begin{align}\label{eq:subint1}G=\left[\begin{array}{cccc}T^+-\lambda I & T^-&\bm{s}^+&\bm{s}^-\\
T^-& T^+-\lambda I & \bm{s}^-&\bm{s}^+\\ 
\bm{0}&\bm{0} & 0& 0\\
\bm{0}&\bm{0} & 0& 0 \end{array}\right]\end{align}
evolving on the state space $\mathcal{E}=\mathcal{E}^o\cup \mathcal{E}^a\cup\{\Delta^o\}\cup\{\Delta^a\}$ where $\mathcal{E}^o:=\{1^o,2^o,\dots,p^o\}$ and $\mathcal{E}^a=\{1^a,2^a,\dots,p^a\}$. The state space $\mathcal{E}$ may be thought as the union of two sets: a collection of \emph{original states} $\mathcal{E}^o\cup\{\Delta^o\}$ and a collection of \emph{anti-states} $\mathcal{E}^a\cup\{\Delta^a\}$, where both $\Delta^o$ and $\Delta^a$ are absorbing. In the case $\lambda>\lambda_0$, the process $\{\varphi_t^\lambda\}_{t\ge 0}$ alternates between sojourn times in $\mathcal{E}^o$ and $\mathcal{E}^a$ up until one of the following happens: (a) get absorbed into $\Delta^o$, (b) get absorbed into $\Delta^a$ or (c) undergo termination due to the defect of (\ref{eq:subint1}). If $\lambda=\lambda_0$, the states $\mathcal{E}^o\cup\mathcal{E}^a$ may or may not be transient, their status depending on the values of $T$. 

In Theorem \ref{th:main4} we establish a link between the absoprtion probabilities of $\{\varphi_t^\lambda\}_{t\ge 0}$ and the vector $e^{(T-\lambda I) x}\bm{s}$ appearing in the exponentially-tilted matrix-exponential density (\ref{eq:flambda1}). More specifically, we express each element in $e^{(T-\lambda I) x}\bm{s}$ as the sum of some positive density functions and some negative density function, where the positive density is associated to an absorption of $\{\varphi_t^\lambda\}_{t\ge 0}$ to $\Delta^o$, while the negative density function corresponds to an absorption of $\{\varphi_t^\lambda\}_{t\ge 0}$ to $\Delta^a$.
{To shorten notation, from now on we denote by $\mathds{P}_j$ ($\mathds{E}_j$) , $j\in\mathcal{E}$, the probability measure (expectation) associated to $\{\varphi^\lambda_t\}_{t\ge 0}$ conditional on the event $\{\varphi^\lambda_0=j\}$.}
\begin{theorem}\label{th:main4}
Let $\lambda\ge \lambda_0$ be such that the states $\mathcal{E}^o\cup\mathcal{E}^a$ are transient. Define
\begin{equation}\label{eq:taudef1}\tau=\inf\{x\ge 0: \varphi_x^\lambda\notin \mathcal{E}^o\cup\mathcal{E}^a\}.\end{equation}
Then, for $i\in \{1,\dots, p\}$ and $x\ge 0$,
\begin{align}\big(\bm{e}_i^\intercal e^{(T-\lambda I)x}\bm{s}\big) \dd x & = \mathds{E}_{i^o}\left[\mathds{1}\{\tau\in [x,x+\dd x]\}  \beta(\varphi_\tau^\lambda)\right]\label{eq:auxmain5}\\
&= (\bm{e}_i^\intercal,\bm{0})\exp\left(\left[\begin{matrix}T^+-\lambda I & T^-\\
T^-& T^+-\lambda I\end{matrix}\right] x\right)\left[\begin{matrix}\bm{s}\\-\bm{s}\end{matrix}\right]\dd x\label{eq:auxmain6}
,\end{align}
where $\bm{e}_i$ denotes the column vector with $1$ on its $i$-th entry and $0$ elsewhere, and $\beta(j):=\mathds{1}\{j=\Delta^o\}-\mathds{1}\{j=\Delta^a\}$. Moreover, 
\begin{align}\big(-\bm{e}_i^\intercal e^{(T-\lambda I)x}\bm{s}\big) \dd x &= \mathds{E}_{i^a}\left[\mathds{1}\{\tau\in [x,x+\dd x]\}  \beta(\varphi_\tau^\lambda)\right]\label{eq:anti5}\\
& = (\bm{0},\bm{e}_i^\intercal)\exp\left(\left[\begin{matrix}T^+-\lambda I & T^-\\
T^-& T^+-\lambda I\end{matrix}\right] x\right)\left[\begin{matrix}\bm{s}\\-\bm{s}\end{matrix}\right]\dd x\label{eq:anti6}
.\end{align}
\end{theorem}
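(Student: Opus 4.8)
The plan is to split the statement into two independent steps: a probabilistic one that rewrites the expectations on the right of \eqref{eq:auxmain5} and \eqref{eq:anti5} in terms of the matrix exponential of the sub-intensity matrix of $\{\varphi_t^\lambda\}_{t\ge0}$ restricted to the transient classes, and a purely linear-algebraic one that collapses that exponential back to $e^{(T-\lambda I)x}$. Write $\widetilde G$ for the top-left $(2p)\times(2p)$ block of $G$, i.e.\ $\widetilde G=\left[\begin{smallmatrix}T^+-\lambda I & T^-\\ T^- & T^+-\lambda I\end{smallmatrix}\right]$. Reading off the block structure of $G$, the process $\{\varphi_t^\lambda\}_{t\ge0}$ killed upon leaving $\mathcal E^o\cup\mathcal E^a$ is a terminating Markov jump process with sub-intensity matrix $\widetilde G$, with exit rate (column) vector $(\bm s^+;\bm s^-)$ towards $\Delta^o$ and exit rate vector $(\bm s^-;\bm s^+)$ towards $\Delta^a$, where $(\bm a;\bm b)$ denotes the stacked vector $\left[\begin{smallmatrix}\bm a\\ \bm b\end{smallmatrix}\right]$. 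Since $\mathcal E^o\cup\mathcal E^a$ is transient by hypothesis, $\tau<\infty$ $\mathds P_j$-a.s.\ for every $j\in\mathcal E^o\cup\mathcal E^a$, and $\varphi_\tau^\lambda\in\{\Delta^o,\Delta^a\}$ unless the path terminates owing to the defect of $G$; in the latter case $\beta(\varphi_\tau^\lambda)=0$, so terminating paths contribute nothing to \eqref{eq:auxmain5}--\eqref{eq:anti6} and may be disregarded.

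For the probabilistic step I would invoke the standard formula for the killed semigroup of a terminating Markov jump process: writing $\bm r$ for the exit rate vector towards a prescribed absorbing state, one has $\mathds P_j(\varphi_x^\lambda=k)=(e^{\widetilde G x})_{jk}$ for transient $j,k$, while conditionally on $\{\varphi_x^\lambda=k\}$ the process is absorbed into that state during $[x,x+\dd x]$ with probability $r_k\,\dd x+o(\dd x)$; summing over $k$ yields $\mathds P_j(\tau\in[x,x+\dd x],\ \varphi_\tau^\lambda=\text{that state})=\bm e_j^\intercal e^{\widetilde G x}\bm r\,\dd x$ (equivalently, in integrated form, $\mathds P_j(\tau\le x,\ \varphi_\tau^\lambda=\text{that state})=\int_0^x\bm e_j^\intercal e^{\widetilde G u}\bm r\,\dd u$). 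Applying this with $j=i^o$, so that $\bm e_j^\intercal=(\bm e_i^\intercal,\bm 0)$ in the block decomposition, once with the absorbing state $\Delta^o$ and $\bm r=(\bm s^+;\bm s^-)$ and once with $\Delta^a$ and $\bm r=(\bm s^-;\bm s^+)$, and then subtracting by means of $\beta(\varphi_\tau^\lambda)=\mathds 1\{\varphi_\tau^\lambda=\Delta^o\}-\mathds 1\{\varphi_\tau^\lambda=\Delta^a\}$, gives
\begin{equation*}
\mathds E_{i^o}\!\left[\mathds 1\{\tau\in[x,x+\dd x]\}\,\beta(\varphi_\tau^\lambda)\right]
=(\bm e_i^\intercal,\bm 0)\,e^{\widetilde G x}\!\left(\begin{bmatrix}\bm s^+\\ \bm s^-\end{bmatrix}-\begin{bmatrix}\bm s^-\\ \bm s^+\end{bmatrix}\right)\dd x
=(\bm e_i^\intercal,\bm 0)\,e^{\widetilde G x}\begin{bmatrix}\bm s\\ -\bm s\end{bmatrix}\dd x ,
\end{equation*}
which is exactly \eqref{eq:auxmain6}; running the same argument with $j=i^a$, i.e.\ $\bm e_j^\intercal=(\bm 0,\bm e_i^\intercal)$, and with the two exit rate vectors interchanged produces \eqref{eq:anti6}.

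It remains to match \eqref{eq:auxmain6} and \eqref{eq:anti6} with the left-hand sides of the theorem, and here the single substantive observation is that the antidiagonal subspace $\mathcal V:=\{(\bm v;-\bm v):\bm v\in\mathds R^p\}$ is invariant under $\widetilde G$: indeed $\widetilde G\,(\bm v;-\bm v)=\big((T^+-\lambda I-T^-)\bm v;\ -(T^+-\lambda I-T^-)\bm v\big)=\big((T-\lambda I)\bm v;\ -(T-\lambda I)\bm v\big)$, using $T^+-T^-=T$. Iterating, $\widetilde G^{\,n}(\bm v;-\bm v)=\big((T-\lambda I)^n\bm v;\ -(T-\lambda I)^n\bm v\big)$ for all $n\ge0$, and summing the exponential series termwise gives $e^{\widetilde G x}(\bm v;-\bm v)=\big(e^{(T-\lambda I)x}\bm v;\ -e^{(T-\lambda I)x}\bm v\big)$. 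Putting $\bm v=\bm s$ and left-multiplying by $(\bm e_i^\intercal,\bm 0)$ gives $(\bm e_i^\intercal,\bm 0)\,e^{\widetilde G x}(\bm s;-\bm s)=\bm e_i^\intercal e^{(T-\lambda I)x}\bm s$, which proves \eqref{eq:auxmain5}; left-multiplying instead by $(\bm 0,\bm e_i^\intercal)$ picks up an extra minus sign and proves \eqref{eq:anti5}. I do not expect any single computation to be the real obstacle; what needs care is keeping the passage from $G$ to $\widetilde G$, the two exit rate vectors, and the sign convention $\beta$ mutually consistent, after which the invariant-subspace identity does all the work.
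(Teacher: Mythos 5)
Your proof is correct, and for the decisive step it takes a genuinely different route from the paper. Both arguments open the same way: reading off the block structure of $G$, the defective absorption densities toward $\Delta^o$ and $\Delta^a$ are $(\bm e_i^\intercal,\bm 0)e^{\widetilde G x}\left[\begin{smallmatrix}\bm s^+\\ \bm s^-\end{smallmatrix}\right]\dd x$ and $(\bm e_i^\intercal,\bm 0)e^{\widetilde G x}\left[\begin{smallmatrix}\bm s^-\\ \bm s^+\end{smallmatrix}\right]\dd x$, and subtracting via $\beta$ yields \eqref{eq:auxmain6}; your remark that terminated paths carry $\beta=0$ and drop out matches the paper's setup. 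Where you diverge is in identifying \eqref{eq:auxmain6} with $\bm e_i^\intercal e^{(T-\lambda I)x}\bm s$. The paper proves the key identity $\Phi_{oo}(x)-\Phi_{oa}(x)=e^{(T-\lambda I)x}$ probabilistically: it conditions on the first exit time from $\mathcal E^o$, uses the symmetry $\Phi_{ao}=\Phi_{oa}$, $\Phi_{aa}=\Phi_{oo}$ to derive a matrix-integral (renewal-type) equation for $\Phi_o:=\Phi_{oo}-\Phi_{oa}$, and identifies its bounded solution as $e^{[(T^+-\lambda I)-T^-]x}$ by appealing to an external uniqueness theorem. You instead observe that the antidiagonal subspace $\{(\bm v;-\bm v)\}$ is invariant under $\widetilde G$ with $\widetilde G(\bm v;-\bm v)=\big((T-\lambda I)\bm v;-(T-\lambda I)\bm v\big)$, and sum the exponential series termwise to get $e^{\widetilde G x}(\bm s;-\bm s)=\big(e^{(T-\lambda I)x}\bm s;-e^{(T-\lambda I)x}\bm s\big)$; since $(\Phi_{oo}(x),\Phi_{oa}(x))$ is precisely the top block row of $e^{\widetilde G x}$, this is the same identity, obtained by elementary linear algebra. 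Your version is shorter, self-contained, and needs no citation; the paper's version stays entirely within the probabilistic picture of alternating sojourns between original states and anti-states, which is the intuition the rest of the paper builds on. The sign flip for the initial condition $i^a$ and the final assembly of \eqref{eq:auxmain5} and \eqref{eq:anti5} are handled correctly in both.
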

\begin{proof}
The block structure of (\ref{eq:subint1}) implies that
\begin{align*}
\mathds{P}_{i^o}(\tau\in [x,x+\dd x], \varphi_\tau^\lambda=\Delta^o) & = (\bm{e}_i^\intercal,\bm{0})\exp\left(\left[\begin{matrix}T^+-\lambda I & T^-\\
T^-& T^+-\lambda I\end{matrix}\right] x\right)\left[\begin{matrix}\bm{s}^+\\\bm{s}^-\end{matrix}\right],\\
\mathds{P}_{i^o}(\tau\in [x,x+\dd x], \varphi_\tau^\lambda=\Delta^a) & = (\bm{e}_i^\intercal,\bm{0})\exp\left(\left[\begin{matrix}T^+-\lambda I & T^-\\
T^-& T^+-\lambda I\end{matrix}\right] x\right)\left[\begin{matrix}\bm{s}^-\\\bm{s}^+\end{matrix}\right],
\end{align*}
therefore, the r.h.s. of (\ref{eq:auxmain5}) is equal to (\ref{eq:auxmain6}). We prove that (\ref{eq:auxmain5}) holds next.

Define the collection of $(p\times p)$-dimensional matrices $\{\Phi_{oa}(x)\}_{x\ge 0}$, $\{\Phi_{ao}(x)\}_{x\ge 0}$, $\{\Phi_{oo}(x)\}_{x\ge 0}$ and $\{\Phi_{oa}(x)\}_{x\ge 0}$ by
\begin{align*}
(\Phi_{oa}(x))_{ij}& =\mathds{P}_{i^o}(\tau>x, \varphi_x^\lambda=j^a) ,\qquad(\Phi_{ao}(x))_{ij} =\mathds{P}_{i^a}(\tau>x, \varphi_x^\lambda=j^o) ,\\
(\Phi_{oo}(x))_{ij}& =\mathds{P}_{i^o}(\tau>x, \varphi_x^\lambda=j^o) ,\qquad(\Phi_{aa}(x))_{ij} =\mathds{P}_{i^a}(\tau>x, \varphi_x^\lambda=j^a) ,
\end{align*}
for all $i,j\in\{1,\dots,p\}$. By the symmetry of the subintensity matrix $G$ it is clear that for all $x\ge 0$, $\Phi_{oa}(x)=\Phi_{ao}(x)$ and $\Phi_{oo}(x)=\Phi_{aa}(x)$, even if their probabilistic interpretations differ. For all $x\ge 0$, let $\Phi_{o}(x):=\Phi_{oo}(x)-\Phi_{oa}(x)$ and $\Phi_{a}(x):=\Phi_{aa}(x)-\Phi_{ao}(x)$. Define
\[\gamma=\inf\{r \ge 0: \varphi_r^\lambda\notin\mathcal{E}^o\}.\]
%\oscar{Improve the proof by breaking it into smaller pieces. Also, the second statement is not yet proven}
Then, for $i,j\in\{1,\dots,p\}$
\begin{align}
\bm{e}_i^\intercal\Phi_{o}(x)\bm{e}_j&=\mathds{P}_{i^o}(\tau>x, \varphi_x^\lambda=j^o) - \mathds{P}_{i^o}(\tau>x, \varphi_x^\lambda=j^a)\nonumber\\
& = \left\{\mathds{P}_{i^o}(\gamma>x, \varphi_x^\lambda=j^o) + \int_{0}^x \mathds{P}_{i^o}(\gamma\in[r,r+\dd r], \tau>x, \varphi_x^\lambda=j^o)\right\}\nonumber\\
 &\quad -\left\{\mathds{P}_{i^o}(\gamma>x, \varphi_x^\lambda=j^a) + \int_{0}^x \mathds{P}_{i^o}(\gamma\in[r,r+\dd r], \tau>x, \varphi_x^\lambda=j^a)\right\}\nonumber\\
& = \mathds{P}_{i^o}(\gamma>x, \varphi_x^\lambda=j^o)\nonumber\\
&\quad + \int_{0}^x \sum_{k=1}^p \mathds{P}_{i^o}(\gamma\in[r,r+\dd r], \varphi_\gamma^\lambda=k^a) \mathds{P}_{k^a}(\tau>x-r, \varphi_{x-r}^\lambda=j^o)\nonumber \\
&\quad - \int_{0}^x \sum_{k=1}^p \mathds{P}_{i^o}(\gamma\in[r,r+\dd r], \varphi_\gamma^\lambda=k^a) \mathds{P}_{k^a }(\tau>x-r, \varphi_{x-r}^\lambda=j^a),\label{eq:aux7}
\end{align}
where in the last equality we used that $\{\gamma>x, \varphi_x^\lambda=j^a\}=\varnothing$ and the Markov property of $\{\varphi_{x}^\lambda\}_{x\ge 0}$. Note that all the elements in (\ref{eq:aux7}) correspond to transition probabilities or intensities that can be expressed in matricial form as follows:
\begin{align*}
\mathds{P}_{i^o}(\gamma>x, \varphi_x^\lambda=j^o)&=\bm{e}_i^\intercal e^{(T^+-\lambda I)x}\bm{e}_j,\\
\mathds{P}_{i^o}(\gamma\in[r,r+\dd r], \varphi_\gamma^\lambda=k^a)&= \bm{e}_i^\intercal e^{(T^+-\lambda I)r}T^-\bm{e}_k\dd r,\\
\mathds{P}_{k^a}(\tau>x-r, \varphi_{x-r}^\lambda=j^o) & = \bm{e}_k^\intercal \Phi_{ao}(x-r)\bm{e}_j,\\
\mathds{P}_{k^a}(\tau>x-r, \varphi_{x-r}^\lambda=j^a) & = \bm{e}_k^\intercal \Phi_{aa}(x-r)\bm{e}_j.
\end{align*}
Substituting these expressions into (\ref{eq:aux7}) gives
\begin{align*}
\bm{e}_i^\intercal\Phi_{o}(x)\bm{e}_j & = \bm{e}_i^\intercal e^{(T^+-\lambda I)x}\bm{e}_j + \int_{0}^x \sum_{k=1}^p \left(\bm{e}_i^\intercal e^{(T^+-\lambda I)r}T^-\bm{e}_k\right) \left(\bm{e}_k^\intercal \Phi_{ao}(x-r)\bm{e}_j\right) \dd r\\
& \quad- \int_{0}^x \sum_{k=1}^p \left(\bm{e}_i^\intercal e^{(T^+-\lambda I)r}T^-\bm{e}_k\right) \left(\bm{e}_k^\intercal \Phi_{aa}(x-r)\bm{e}_j\right)\dd r\\
& = \bm{e}_i^\intercal\left(e^{(T^+-\lambda I)x} + \int_{0}^xe^{(T^+-\lambda I)r}T^-\left[\Phi_{ao}(x-r)-\Phi_{aa}(x-r)\right]\dd r\right)\bm{e}_j\\
& = \bm{e}_i^\intercal\left(e^{(T^+-\lambda I)x} + \int_{0}^xe^{(T^+-\lambda I)r}T^-\left[\Phi_{oa}(x-r)-\Phi_{oo}(x-r)\right]\dd r\right)\bm{e}_j\\
& = \bm{e}_i^\intercal\left(e^{(T^+-\lambda I)x} + \int_{0}^xe^{(T^+-\lambda I)r}(-T^-)\Phi_{o}(x-r)\dd r\right)\bm{e}_j,
\end{align*}
so that $\{\Phi_{o}(x)\}_{x\ge 0}$ is the bounded solution to the matrix-integral equation
\[\Phi_{o}(x)=e^{(T^+-\lambda I)x} + \int_{0}^xe^{(T^+-\lambda I)r}(-T^-)\Phi_{o}(x-r)\dd r.\]
By \cite[Theorem 3.10]{bean2021rapmodulated},  
\[\Phi_{o}(x)=e^{[(T^+-\lambda I)+(-T^-)]x}=e^{(T-\lambda I)x}.\]
The Markov property implies that
\begin{align*}
&\mathds{P}_{i^o}(\tau\in [x,x+\dd x], \varphi_\tau^\lambda=\Delta^o)\\
&\quad = \sum_{k=1}^p \mathds{P}_{i^o}(\tau>x, \varphi_x^\lambda=k^o)\mathds{P}_{k^o}(\tau\in[x,x+\dd x], \varphi_\tau^\lambda=\Delta^o)\\
&\quad\quad+\sum_{k=1}^p\mathds{P}_{i^o}(\tau>x, \varphi_x^\lambda=k^a)\mathds{P}_{k^a}(\tau\in[x, x+\dd x], \varphi_\tau^\lambda=\Delta^o)\\
&\quad = \sum_{k=1}^p (\bm{e}_i^\intercal\Phi_{oo}(x)\bm{e}_k)(\bm{e}_k^\intercal \bm{s}^+)\dd x + \sum_{k=1}^p (\bm{e}_i^\intercal\Phi_{oa}(x)\bm{e}_k)(\bm{e}_k^\intercal \bm{s}^-\dd x)\\
&\quad = \bm{e}_i^\intercal (\Phi_{oo}(x)\bm{s}^+ + \Phi_{oa}(r)\bm{s}^-)\dd x.
\end{align*}
Similarly,
\[\mathds{P}_{i^o}(\tau\in [x,x+\dd x], \varphi_\tau^\lambda=\Delta^a) = \bm{e}_i^\intercal (\Phi_{oa}(x)\bm{s}^+ + \Phi_{oo}(x)\bm{s}^-)\dd x.\]
Thus,
\begin{align*}
&\mathds{P}_{i^o}(\tau\in [x,x+\dd x], \varphi_\tau^\lambda=\Delta^o) - \mathds{P}_{i^o}(\tau\in [x,x+\dd x], \varphi_\tau^\lambda=\Delta^a)\\
&\quad = \bm{e}_i^\intercal ([\Phi_{oo}(x)\bm{s}^+ + \Phi_{oa}(x)\bm{s}^-]-[\Phi_{oa}(x)\bm{s}^+ + \Phi_{oo}(x)\bm{s}^-])\dd x\\
&\quad = \bm{e}_i^\intercal ([\Phi_{oo}(x)-\Phi_{oa}(x)]\bm{s}^+ - [\Phi_{oo}(x)-\Phi_{oa}(x)]\bm{s}^-)\dd x\\
& \quad = \bm{e}_i^\intercal\Phi_{o}(x)\bm{s}\dd x = \bm{e}_i^\intercal e^{(T-\lambda I)x}\bm{s}\dd x,
\end{align*}
so that (\ref{eq:auxmain5}) holds. Analogous arguments follow for (\ref{eq:anti5}) and (\ref{eq:anti6}), which completes the proof.
\end{proof}
% \begin{Lemma}\label{lem:matrixintegral}
% Let the collection $\{\bm{\Pi}(x)\}_{x\ge 0}$ be a bounded solution to the matrix-integral equation
% \begin{equation}\label{eq:matrixintegral1}\bm{\Pi}(x)=e^{\bm{\Gamma}_1 x}+\int_0^x e^{\bm{\Gamma}_1r}\bm{\Gamma}_2\bm{\Pi}(x-r)\dd r, \quad x\ge 0\end{equation}
% where $\bm{\Gamma}_1$ and $\bm{\Gamma}_2$ are square matrices of appropiate dimension. Then
% \[\bm{\Pi}(x)=e^{(\bm{\Gamma}_1+\bm{\Gamma}_2) x},\quad x\ge 0.\]
% \end{Lemma}
% \begin{proof}
% See \cite[Theorem 3.10]{bean2021rapmodulated}.
% \end{proof}

Heuristically, Equations (\ref{eq:auxmain5}) and (\ref{eq:anti5}) imply that initiating $\{\varphi^\lambda_t\}_{t\ge 0}$ in the anti-state $i^a$ has the opposite effect, in terms of sign, to initiating in the original state $i^o$. In the following we exploit this fact to provide a probabilistic interpretation not only for the elements of $e^{(T-\lambda I)x}\bm{s}$, but for the exponentially-tilted matrix-exponential density $\bm{\alpha} e^{(T-\lambda I)x}\bm{s}$.

Define $w^+$ and $w^-$ by
\[w^\pm=\sum_{i=1}^p \max\{0,\pm\alpha_i\},\]
and define $\bm{\alpha}^+=(\alpha_1^+,\dots,\alpha_p^+)$ and $\bm{\alpha}^-=(\alpha_1^-,\dots,\alpha_p^-)$ by
\begin{align*}
\alpha^{\pm}_{i}=\left\{\begin{array}{ccc}\frac{1}{w^\pm}\max\{0,\pm\alpha_i\}&\mbox{if}&w^\pm>0\\
0&\mbox{if}& w^\pm=0.\end{array}\right.
\end{align*}
If $w^\pm>0$ then $\bm{\alpha}^\pm$ is a probability vector and in general,
\begin{equation}\label{eq:alphadec1}\bm{\alpha}=w^+\bm{\alpha}^+ - w^-\bm{\alpha}^-.\end{equation}
In some sense, $(w^+ + w^-)^{-1}\bm{\alpha}$ can be thought as a mixture of the probability vectors $\bm{\alpha}^+$ and $\bm{\alpha}^-$, with the latter contributing ``negative mass''. Fortunately, this ``negative mass'' in the context of $\bm{\alpha} e^{(T-\lambda I)x}\bm{s}$ can be given a precise probabilistic interpretation by means of anti-states as follows. 
%\oscar{Fix the statement}
\begin{theorem}\label{th:main3}
Let $f_\lambda(x)=(\bm{\alpha}(\lambda I-T)^{-1}\bm{s})^{-1}\bm{\alpha}e^{(T-\lambda I)x}\bm{s}$, $x\ge 0$, be the density of the exponentially-tilted matrix-exponential distribution of parameters $(\bm{\alpha},T,\bm{s})$. Define the vectors 
\[\widehat{\bm{\alpha}}^+:= \tfrac{w^+}{w^++w^-}\bm{\alpha}^+\quad\mbox{and}\quad\widehat{\bm{\alpha}}^-:= \tfrac{w^-}{w^++w^-}\bm{\alpha}^-,\] and suppose $\varphi_0^\lambda\sim (\widehat{\bm{\alpha}}^+, \widehat{\bm{\alpha}}^-)$. Then, 
\begin{align}
f_\lambda(x) \dd x &= \frac{w^++w^-}{\bm{\alpha}(\lambda I-T)^{-1}\bm{s}}\mathds{E}\left[\mathds{1}\{\tau\in [x,x+\dd x]\}  \beta(\varphi_\tau^\lambda)\right]\label{eq:gen1}\\
& =  \frac{w^++w^-}{\bm{\alpha}(\lambda I-T)^{-1}\bm{s}}\left((\widehat{\bm{\alpha}}^+,\widehat{\bm{\alpha}}^-)\exp\left(\left[\begin{matrix}T^+-\lambda I & T^-\\
T^-& T^+-\lambda I\end{matrix}\right] x\right)\left[\begin{matrix}\bm{s}\\-\bm{s}\end{matrix}\right]\right)\dd x,\label{eq:gen2}
\end{align}
where $\tau$ and $\beta(\cdot)$ are defined as in Theorem \ref{th:main4}.
\end{theorem}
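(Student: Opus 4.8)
The plan is to obtain Theorem \ref{th:main3} as a direct consequence of Theorem \ref{th:main4} by conditioning on the initial state $\varphi_0^\lambda$; no genuinely new probabilistic input is required, the content being the correct bookkeeping of signs and of the normalising constant. I would first note that the stated hypotheses implicitly carry over the transience of $\mathcal{E}^o\cup\mathcal{E}^a$ from Theorem \ref{th:main4} (this is what makes $\tau$ a.s.\ finite and the densities there valid), and that $(\widehat{\bm{\alpha}}^+,\widehat{\bm{\alpha}}^-)$ really is a probability vector on $\mathcal{E}$: since $\bm{\alpha}\neq\bm{0}$ we have $w^++w^->0$; whenever $w^\pm>0$ the vector $\bm{\alpha}^\pm$ sums to $1$, and whenever $w^\pm=0$ the block $\widehat{\bm{\alpha}}^\pm$ is the zero vector and contributes nothing, so in all cases the total mass equals $\tfrac{w^+}{w^++w^-}+\tfrac{w^-}{w^++w^-}=1$. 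Hence the unsubscripted $\mathds{E}$ appearing in the statement is meaningful.

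Next I would condition on $\varphi_0^\lambda$. Because $(\widehat{\bm{\alpha}}^+,\widehat{\bm{\alpha}}^-)$ places no mass on $\{\Delta^o,\Delta^a\}$, its decomposition into the point masses at $i^o$ (weight $\widehat{\alpha}_i^+$) and at $i^a$ (weight $\widehat{\alpha}_i^-$) gives
\[
\mathds{E}\!\left[\mathds{1}\{\tau\in[x,x+\dd x]\}\,\beta(\varphi_\tau^\lambda)\right]
=\sum_{i=1}^{p}\widehat{\alpha}_i^+\,\mathds{E}_{i^o}\!\left[\mathds{1}\{\tau\in[x,x+\dd x]\}\,\beta(\varphi_\tau^\lambda)\right]
+\sum_{i=1}^{p}\widehat{\alpha}_i^-\,\mathds{E}_{i^a}\!\left[\mathds{1}\{\tau\in[x,x+\dd x]\}\,\beta(\varphi_\tau^\lambda)\right].
\]
Substituting (\ref{eq:auxmain5}) into the first sum and (\ref{eq:anti5}) into the second (the anti-state terms supplying the sign flip) collapses the right-hand side to $\big((\widehat{\bm{\alpha}}^+-\widehat{\bm{\alpha}}^-)\,e^{(T-\lambda I)x}\bm{s}\big)\,\dd x$. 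By the definition of $\widehat{\bm{\alpha}}^\pm$ and the decomposition (\ref{eq:alphadec1}), $\widehat{\bm{\alpha}}^+-\widehat{\bm{\alpha}}^-=\tfrac{1}{w^++w^-}\!\left(w^+\bm{\alpha}^+-w^-\bm{\alpha}^-\right)=\tfrac{1}{w^++w^-}\bm{\alpha}$, so that $\mathds{E}[\,\cdot\,]=\tfrac{1}{w^++w^-}\big(\bm{\alpha}e^{(T-\lambda I)x}\bm{s}\big)\,\dd x$. Multiplying through by $(w^++w^-)/\big(\bm{\alpha}(\lambda I-T)^{-1}\bm{s}\big)$ and recognising the right-hand side as $f_\lambda(x)\,\dd x$ via (\ref{eq:flambda1}) proves (\ref{eq:gen1}). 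Identity (\ref{eq:gen2}) then follows by running the very same mixture over the initial state against the matrix-form identities (\ref{eq:auxmain6}) and (\ref{eq:anti6}) in place of (\ref{eq:auxmain5}) and (\ref{eq:anti5}).

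I do not expect a real obstacle here: everything analytic is already contained in Theorem \ref{th:main4}. The only points demanding care are the degenerate cases $w^+=0$ or $w^-=0$ (where one of $\widehat{\bm{\alpha}}^\pm$ degenerates to the zero vector and must be read as contributing an empty mixture rather than a probability vector) and keeping the sign conventions of $\beta(\cdot)$ and of $\widehat{\bm{\alpha}}^-$ mutually consistent, so that the minus sign in (\ref{eq:anti5}) lines up with the minus sign in $\widehat{\bm{\alpha}}^+-\widehat{\bm{\alpha}}^-$.
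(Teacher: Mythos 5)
Your proof is correct and is essentially the paper's own argument run in the opposite direction: the paper starts from $f_\lambda$ and splits $\bm{\alpha}$ via (\ref{eq:alphadec1}) into the weighted sums matching (\ref{eq:auxmain5})--(\ref{eq:anti6}), whereas you start from the expectation, condition on $\varphi_0^\lambda$, and recombine — the same decomposition, the same invocation of Theorem \ref{th:main4}, and the same normalising constant. Your added checks (that $(\widehat{\bm{\alpha}}^+,\widehat{\bm{\alpha}}^-)$ is a genuine probability vector and the degenerate cases $w^\pm=0$) are sound but only elaborate details the paper leaves implicit.
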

\begin{proof}
Equation (\ref{eq:alphadec1}) implies that
\begin{align}
f_\lambda(x)&= \frac{1}{\bm{\alpha}(\lambda I-T)^{-1}\bm{s}}\left(\sum_{i=1}^p w^+\alpha^+_i\left(\bm{e}_i^\intercal e^{(T-\lambda I)x}\bm{s}\right) + \sum_{i=1}^p w^-\alpha^-_i\left(-\bm{e}_i^\intercal e^{(T-\lambda I)x}\bm{s}\right)\right)\nonumber\\
& = \frac{w^++w^-}{\bm{\alpha}(\lambda I-T)^{-1}\bm{s}}\left(\sum_{i=1}^p \tfrac{w^+}{w^++w^-}\alpha^+_i \left(\bm{e}_i^\intercal e^{(T-\lambda I)x}\bm{s}\right) + \sum_{i=1}^p \tfrac{w^-}{w^++w^-}\alpha^-_i\left(-\bm{e}_i^\intercal e^{(T-\lambda I)x}\bm{s}\right)\right)\label{eq:auxalphabars1}.
\end{align}
Equality (\ref{eq:gen1}) follows from (\ref{eq:auxalphabars1}), (\ref{eq:auxmain5}) and (\ref{eq:anti5}). Equality (\ref{eq:gen2}) follows from (\ref{eq:auxalphabars1}), (\ref{eq:auxmain6}) and (\ref{eq:anti6}).
\end{proof}
\begin{Example}\label{ex:tilted1}
Let $(\bm{\alpha},T,\bm{s})$ be the matrix-exponential parameters corresponding to (\ref{eq:MEex1}). {As noted previously, these parameters by themselves lack a probabilistic interpretation, so we apply Theorem \ref{th:main4} to construct one.} For such parameters we take the tilting parameter $\lambda:=\lambda_0=2$, leading to the block-partitioned matrices
\begin{align}
\left[\begin{matrix}T^+-\lambda I & T^-\\
T^-& T^+-\lambda I\end{matrix}\right] =\left[\begin{array}{rrr|rrr}
-3&0&2/3&0&1&0\\
1&-3&0&0&0&2/3\\
0&0&-3&0&0&0\\
\hline
0&1&0&-3&0&2/3\\
0&0&2/3&1&-3&0\\
0&0&0&0&0&-3
\end{array} \right],\nonumber
\end{align}
\[(\widehat{\bm{\alpha}}^+,\widehat{\bm{\alpha}}^-)=(1,0,0,0,0,0),\qquad\left[\begin{matrix}\bm{s}\\-\bm{s}\end{matrix}\right]=\left[\begin{array}{r}4/3\\2/3\\1\\\hline-4/3\\-2/3\\-1\end{array}\right],\]
and $w^+=1$, $w^-=0$ and $\bm{\alpha}(\lambda I-T)^{-1}\bm{s}=4$. We can then verify that
\begin{align*}
&\frac{w^++w^-}{\bm{\alpha}(\lambda I-T)^{-1}\bm{s}}\left((\widehat{\bm{\alpha}}^+,\widehat{\bm{\alpha}}^-)\exp\left(\left[\begin{matrix}T^+-\lambda I & T^-\\
T^-& T^+-\lambda I\end{matrix}\right] x\right)\left[\begin{matrix}\bm{s}\\-\bm{s}\end{matrix}\right]\right)\\
&\qquad=\frac{1}{6} e^{-3x}(1+\cos(x))=\frac{e^{-2x}}{4}\left(\frac{2}{3} e^{-x}(1+\cos(x))\right),
\end{align*}
the latter corresponding to the exponentially-tilted matrix-exponential density function $f(x)=\tfrac{2}{3} e^{-x}(1+\cos(x))$.\qed
\end{Example}
A probabilistic interpretation of $f_\lambda$ alternative to that of (\ref{eq:gen1}) is the following.
\begin{Corollary}\label{cor:alt2}
Define $\bm{d}= (d_1,\dots,d_p)^\intercal:=-(T^+-\lambda I)\bm{1} - T^-\bm{1}$ to be the termination intensities vector from $\mathcal{E}^o$ or $\mathcal{E}^a$, and define $\bm{q}^\pm=(q_1^\pm,\dots,q_p^\pm)^\intercal$ by
\[q^\pm_i=\left\{\begin{array}{ccc}\frac{s^\pm_i}{d_i}&\mbox{if} &d_i>0,\\ 0& \mbox{if}&d_i=0.\end{array}\right.\]
Let $\bar{q}:\mathcal{E}^o\cup\mathcal{E}^a\mapsto \mathds{R}$ be defined by
\begin{align*}
\bar{q}(i^o)=q^+_i - q^-_i\quad\mbox{ and }\quad\bar{q}(i^a)=q^-_i - q^+_i\quad\mbox{ for }\quad i\in\{1,\dots, p\}.
\end{align*}
Then,
\begin{align}
f_\lambda(x) \dd x &= \left(\frac{w^++w^-}{\bm{\alpha}(\lambda I-T)^{-1}\bm{s}}\right)\mathds{E}\left[\mathds{1}\{\tau\in [x,x+\dd x]\} \bar{q}\big(\varphi^\lambda_{\tau^-}\big)\right]\label{eq:cor1}
\end{align}
where $\{\varphi^\lambda_t\}_{t\ge 0}$ and $\tau$ are defined as in Theorem \ref{th:main3}.\end{Corollary}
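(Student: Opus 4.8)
The plan is to deduce Corollary \ref{cor:alt2} directly from Theorem \ref{th:main3} by replacing, inside the expectation in (\ref{eq:gen1}), the quantity $\beta(\varphi^\lambda_\tau)$ — which records \emph{into which} cemetery state the process is absorbed — by $\bar q(\varphi^\lambda_{\tau^-})$ — which records the \emph{last} state visited in $\mathcal{E}^o\cup\mathcal{E}^a$. Since (\ref{eq:gen1}) was itself obtained by averaging (\ref{eq:auxmain5}) and (\ref{eq:anti5}) against the initial law $(\widehat{\bm\alpha}^+,\widehat{\bm\alpha}^-)$ and multiplying by the constant $(w^++w^-)/(\bm\alpha(\lambda I-T)^{-1}\bm s)$, it is enough to establish the two ``per-state'' identities
\[
\mathds{E}_{i^o}\!\left[\mathds{1}\{\tau\in[x,x+\dd x]\}\,\bar q(\varphi^\lambda_{\tau^-})\right]=\bm e_i^\intercal e^{(T-\lambda I)x}\bm s\,\dd x,\qquad \mathds{E}_{i^a}\!\left[\mathds{1}\{\tau\in[x,x+\dd x]\}\,\bar q(\varphi^\lambda_{\tau^-})\right]=-\bm e_i^\intercal e^{(T-\lambda I)x}\bm s\,\dd x,
\]
which by (\ref{eq:auxmain5})--(\ref{eq:anti5}) coincide with the identities already proved for $\beta(\varphi^\lambda_\tau)$; averaging them over $(\widehat{\bm\alpha}^+,\widehat{\bm\alpha}^-)$ and reinserting the constant then upgrades (\ref{eq:gen1}) to (\ref{eq:cor1}).

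First I would record the meaning of $\bm d,\bm q^+,\bm q^-$ in terms of $G$. Reading off (\ref{eq:subint1}), the total intensity at which $\{\varphi^\lambda_t\}_{t\ge0}$ leaves $\mathcal{E}^o\cup\mathcal{E}^a$, out of either $i^o$ or $i^a$, equals $d_i=-[(T^+-\lambda I)\bm 1]_i-[T^-\bm 1]_i$: it splits into the intensity $s_i^+$ of a jump to $\Delta^o$, the intensity $s_i^-$ of a jump to $\Delta^a$, and the defect $d_i-s_i^+-s_i^-\ge 0$ of that row (the process being killed, in which case $\beta$ contributes $0$). From $i^o$ the outcome is $\Delta^o$, $\Delta^a$ or termination with respective probabilities $q_i^+=s_i^+/d_i$, $q_i^-=s_i^-/d_i$, $1-q_i^+-q_i^-$; from $i^a$ the block structure of $G$ swaps the first two. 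Hence $\mathds{E}[\beta(\varphi^\lambda_\tau)\mid\varphi^\lambda_{\tau^-}=i^o]=q_i^+-q_i^-=\bar q(i^o)$ and likewise for $i^a$. I would also note that $\{\varphi^\lambda_{\tau^-}=i^o\}$ forces $d_i>0$, so the convention $q_i^\pm:=0$ when $d_i=0$ costs nothing, and that the book-keeping identity $\bar q(k^o)\,d_k=s_k^+-s_k^-=-\bar q(k^a)\,d_k$ holds for every $k$, trivially when $d_k=0$ (then $s_k^+=s_k^-=0$).

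Next I would turn this into a computation with the transition matrices $\Phi_{oo}(\cdot)$ and $\Phi_{oa}(\cdot)$ introduced in the proof of Theorem \ref{th:main4}, which avoids any delicate reasoning about the $\sigma$-algebra $\mathcal F_{\tau^-}$. Applying the Markov property at the deterministic time $x$ together with the instantaneous exit rates above yields, for $i,k\in\{1,\dots,p\}$,
\[
\mathds{P}_{i^o}(\tau\in[x,x+\dd x],\,\varphi^\lambda_{\tau^-}=k^o)=(\Phi_{oo}(x))_{ik}\,d_k\,\dd x,\qquad \mathds{P}_{i^o}(\tau\in[x,x+\dd x],\,\varphi^\lambda_{\tau^-}=k^a)=(\Phi_{oa}(x))_{ik}\,d_k\,\dd x,
\]
and the obvious analogues when the chain starts in $i^a$. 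Multiplying by $\bar q(k^o)$, resp. $\bar q(k^a)$, summing over $k$, and using $\bar q(k^o)d_k=s_k^+-s_k^-=-\bar q(k^a)d_k$, the factors $d_k$ cancel and the sum collapses to $\bm e_i^\intercal(\Phi_{oo}(x)-\Phi_{oa}(x))(\bm s^+-\bm s^-)\dd x=\bm e_i^\intercal\Phi_o(x)\bm s\,\dd x=\bm e_i^\intercal e^{(T-\lambda I)x}\bm s\,\dd x$, invoking $\Phi_o=\Phi_{oo}-\Phi_{oa}$ and $\Phi_o(x)=e^{(T-\lambda I)x}$ from the proof of Theorem \ref{th:main4}. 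This is the first per-state identity; the one started in $i^a$ follows identically, the overall sign change coming from $\bar q(k^a)d_k=-(s_k^+-s_k^-)$.

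The main obstacle is precisely the decomposition of the third paragraph: one must justify that, conditionally on the last state $\varphi^\lambda_{\tau^-}$ visited in $\mathcal{E}^o\cup\mathcal{E}^a$, the exit destination $\varphi^\lambda_\tau$ is chosen among $\Delta^o$, $\Delta^a$ and killing with probabilities proportional to the corresponding intensities, \emph{independently} of the exit time $\tau$ itself. This is the classical fact that, given its embedded skeleton, a Markov jump process has conditionally independent exponential holding times; rather than formalising it via $\mathcal F_{\tau^-}$ and the strong Markov property, I would package it — as above — into the transition functions $\Phi_{oo},\Phi_{oa}$ already computed in the proof of Theorem \ref{th:main4}, so that the only probabilistic input actually used is the Markov property at a deterministic time.
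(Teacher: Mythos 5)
Your proof is correct and follows essentially the same route as the paper: both rest on the observation that, given the last state visited in $\mathcal{E}^o\cup\mathcal{E}^a$, the exit destination is $\Delta^o$, $\Delta^a$ or killing with probabilities $q^+_i$, $q^-_i$, $1-q^+_i-q^-_i$ (swapped for anti-states) independently of $\tau$, so that $\beta(\varphi^\lambda_\tau)$ may be replaced by $\bar q(\varphi^\lambda_{\tau^-})$ inside the expectation in (\ref{eq:gen1}). The only difference is one of formalisation: the paper packages this as $\mathds{E}[\beta(\varphi^\lambda_\tau)\mid\tau,\varphi^\lambda_{\tau^-}]=\bar q(\varphi^\lambda_{\tau^-})$ followed by the tower property, whereas you verify the equivalent per-state identities by summing over the last state using the matrices $\Phi_{oo},\Phi_{oa}$ from the proof of Theorem \ref{th:main4}, which amounts to the same computation.
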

\begin{proof}
First, notice that the jump mechanism of $\{\varphi^\lambda_t\}_{t\ge 0}$ described in (\ref{eq:subint1}) imply that for $i\in\{1,\dots,p\}$,
\begin{align*}
\mathds{P}(\varphi^\lambda_\tau=\Delta^o\mid \tau, \varphi^\lambda_{\tau^-}=i^o)&=q_i^+,\\
\mathds{P}(\varphi^\lambda_\tau=\Delta^a\mid \tau, \varphi^\lambda_{\tau^-}=i^o)&=q_i^-,\\
\mathds{P}(\varphi^\lambda_\tau=\Delta^o\mid \tau, \varphi^\lambda_{\tau^-}=i^a)&=q_i^-,\\
\mathds{P}(\varphi^\lambda_\tau=\Delta^a\mid \tau, \varphi^\lambda_{\tau^-}=i^a)&=q_i^+,
\end{align*}
which in turn implies that
\begin{align*}
\mathds{E}\left[\beta(\varphi_\tau^\lambda)\mid \tau,\varphi^\lambda_{\tau^-}\right]=\mathds{P}\left(\varphi_\tau^\lambda=\Delta^o\mid \tau,\varphi^\lambda_{\tau^-}\right) - \mathds{P}\left(\varphi_\tau^\lambda=\Delta^a\mid \tau,\varphi^\lambda_{\tau^-}\right)=\bar{q}(\varphi^\lambda_{\tau^-}).
\end{align*}
Consequently,
\begin{align*}
\mathds{E}\left[\mathds{1}\{\tau\in [x,x+\dd x]\}  \beta(\varphi_\tau^\lambda)\right]&=\mathds{E}\left[\mathds{E}\left[\mathds{1}\{\tau\in [x,x+\dd x]\}  \beta(\varphi_\tau^\lambda)\mid \tau,\varphi^\lambda_{\tau^-}\right]\right]\\
& = \mathds{E}\left[\mathds{1}\{\tau\in [x,x+\dd x]\}\mathds{E}\left[\beta(\varphi_\tau^\lambda)\mid \tau,\varphi^\lambda_{\tau^-}\right]\right]\\
& =\mathds{E}\left[\mathds{1}\{\tau\in [x,x+\dd x]\} \bar{q}\big(\varphi^\lambda_{\tau^-}\big)\right]
\end{align*}
and the result follows from (\ref{eq:gen1}).
\end{proof}
{Though closely related, the interpretation provided by Corollary \ref{cor:alt2} is more suitable than that of Theorem \ref{th:main3} for Monte Carlo applications. Indeed, a realization of $\{\varphi^\lambda_t\}_{t\ge 0}$ may get absorbed in $\Delta^o$, $\Delta^a$ or terminated. If termination is the case, such realization contributes nothing to the term in the r.h.s. of (\ref{eq:gen1}). In contrast, by observing the process until its exit time of $\mathcal{E}^o\cup\mathcal{E}^a$ and disregard its landing point as in Corollary \ref{cor:alt2}, we make sure that each realization contributes towards the mass in the r.h.s. of (\ref{eq:cor1}).}
\section{Recovering the untilted distribution}\label{sec:recovering}
Once the exponentially-tilted density $f_\lambda$ of a matrix-exponential distribution of parameters $(\bm{\alpha},T,\bm{s})$ has a tractable known form, say as in (\ref{eq:gen2}), in principle it is straightforward to recover the original untilted density $f$ by taking
\begin{align}f(x)& =(\bm{\alpha}(\lambda I-T)\bm{s})e^{\lambda x} f_{\lambda}(x)\nonumber\\
& = (w^++w^-)(\widehat{\bm{\alpha}}^+,\widehat{\bm{\alpha}}^-)\exp\left(\left[\begin{matrix}T^+ & T^-\\
T^-& T^+\end{matrix}\right] x\right)\left[\begin{matrix}\bm{s}\\-\bm{s}\end{matrix}\right],\qquad x\ge 0.\label{eq:untilted1}\end{align}
While (\ref{eq:untilted1}) is a legitimate matrix-exponential representation of $f$, it has two drawbacks:
\begin{enumerate}
	\item The matrix $\left[\begin{smallmatrix}T^+ & T^-\\
T^-& T^+\end{smallmatrix}\right]$ may no longer be a subintensity matrix.
\item The dominant eigenvalue of $\left[\begin{smallmatrix}T^+ & T^-\\
T^-& T^+\end{smallmatrix}\right]$ may be nonnegative.
\end{enumerate}
The first item may impact the probabilistic interpretation of $f$, while the second one may make integration of certain functions (with respect to the density $f$) more difficult to handle. For instance, in the context of Example \ref{ex:tilted1}, the matrix
\[\left[\begin{matrix}T^+ & T^-\\
T^-& T^+ \end{matrix}\right] =\left[\begin{array}{rrr|rrr}
-1&0&2/3&0&1&0\\
1&-1&0&0&0&2/3\\
0&0&-1&0&0&0\\
\hline
0&1&0&-1&0&2/3\\
0&0&2/3&1&-1&0\\
0&0&0&0&0&-1
\end{array} \right]\]
is not a subintensity matrix since some row sums are strictly positive, and it has $0$ as its dominant eigenvalue. Having $0$ as an eigenvalue implies that some entries of $\exp\left(\left(\begin{smallmatrix}T^+ & T^-\\
T^-& T^+\end{smallmatrix}\right)x\right)$ can potentially be of order $e^{0\cdot x}=1$, meaning that the matrix-integral
\begin{equation}\label{eq:intexpanded1}\int_{0}^\infty h(x) \exp\left(\left[\begin{smallmatrix}T^+ & T^-\\
T^-& T^+\end{smallmatrix}\right]x\right)\dd x\end{equation}
may only be well-defined for functions $h:\mathds{R}_+\mapsto\mathds{R}_+$ that decrease to $0$ fast enough. In comparison, $\exp(Tx)$ with $T$ as in (\ref{eq:MEex1}) has entries of at most $e^{\sigma_0 x}=e^{-x}$, so that 
\begin{equation}\label{eq:intoriginal1}\int_{0}^\infty h(x) \exp\left(Tx\right)\dd x\end{equation}
is well-defined and finite for every function $h:\mathds{R}_+\mapsto\mathds{R}_+$ of the order $e^{\sigma x}$ for any $\sigma<1$. This apparent disagreement between the applicability of (\ref{eq:intexpanded1}) and (\ref{eq:intoriginal1}) vanishes when we multiply $\exp\left(\left[\begin{smallmatrix}T^+ & T^-\\
T^-& T^+\end{smallmatrix}\right]x\right)$ with $\left[\begin{smallmatrix}\bm{s}\\ -\bm{s}\end{smallmatrix}\right]$. Indeed, in the context of Example \ref{ex:tilted1} it can be verified that the elements of the vector $\exp\left(\left[\begin{smallmatrix}T^+ & T^-\\
T^-& T^+\end{smallmatrix}\right]x\right)\left[\begin{smallmatrix}\bm{s}\\ -\bm{s}\end{smallmatrix}\right]$ are at most of order $e^{-x}$, with the higher order terms of $\exp\left(\left[\begin{smallmatrix}T^+ & T^-\\
T^-& T^+\end{smallmatrix}\right]x\right)$ cancelling each other when we multiply the matrix-function by $\left[\begin{smallmatrix}\bm{s}\\ -\bm{s}\end{smallmatrix}\right]$. In the general case, this ``cancellation'' of higher order terms than those of $e^{\sigma_0 x}$ occuring \emph{after} the matrix multiplication can be directly deduced from Theorem \ref{th:main4}.

In terms of expectations, (\ref{eq:gen1}) and (\ref{eq:cor1}) provide alternative ways to recover properties of any matrix-exponential density $f$ of parameters $(\bm{\alpha},T,\bm{s})$ in terms of the exponentially-tilted density $f_\lambda$. Indeed, for any function $h:\mathds{R}_+\mapsto\mathds{R}_+$ with $\int_0^\infty h(x)f(x)\dd x<\infty$, we have that
\begin{align}
\int_0^\infty h(x)f(x)\dd x & = (\bm{\alpha}(\lambda I-T)\bm{s})\int_0^\infty h(x)e^{\lambda x}f(x)\dd x \\
& = (w^++w^-)\mathds{E}\left[ h(\tau)e^{\lambda \tau} \beta(\varphi^\lambda_\tau)\right]\label{eq:probunt1}\\
& = (w^++w^-)\mathds{E}\left[ h(\tau)e^{\lambda \tau} \bar{q}(\varphi^\lambda_{\tau^-})\right],\label{eq:probunt2}
\end{align}
where $\{\varphi^\lambda_t\}_{t\ge 0}$ and $\tau$ are as in Theorem \ref{th:main3}. Notice that, as oppossed to the formula in (\ref{eq:untilted1}), representations (\ref{eq:probunt1}) and (\ref{eq:probunt2}) still have probabilistic interpretations in terms of the Markov jump process $\{\varphi^\lambda_t\}_{t\ge 0}$.

\section*{Acknowledgement} 
The author acknowledges the funding of the Australian Research Council Discovery Project DP180103106.

\bibliographystyle{abbrv}
\bibliography{oscar}

\begin{thebibliography}{10}

\bibitem{bladt1992renewal}
S.~Asmussen and M.~Bladt.
\newblock {\em Renewal theory and queueing algorithms for matrix-exponential
  distributions}, pages 313--341.
\newblock Lecture notes in pure and applied mathematics. Marcel Dekker, 1996.

\bibitem{asmussen1999point}
S.~Asmussen and M.~Bladt.
\newblock Point processes with finite-dimensional conditional probabilities.
\newblock {\em Stochastic Processes and their Applications}, 82(1):127--142,
  1999.

\bibitem{bean2021rapmodulated}
N.~G. Bean, G.~T. Nguyen, B.~F. Nielsen, and O.~Peralta.
\newblock {RAP}-modulated fluid processes: First passages and the stationary
  distribution.
\newblock {\em arXiv preprint arXiv:2101.03242}, 2021.

\bibitem{bean2010quasi}
N.~G. Bean and B.~F. Nielsen.
\newblock Quasi-birth-and-death processes with rational arrival process
  components.
\newblock {\em Stochastic Models}, 26(3):309--334, 2010.

\bibitem{bladt2003matrix}
M.~Bladt and M.~F. Neuts.
\newblock Matrix-exponential distributions: calculus and interpretations via
  flows.
\newblock {\em Stochastic Models}, 19(1):113--124, 2003.

\bibitem{bladt2017matrix}
M.~Bladt and B.~F. Nielsen.
\newblock {\em Matrix-exponential distributions in applied probability},
  volume~81.
\newblock Springer, 2017.

\bibitem{Cox:2008hh}
D.~R. Cox.
\newblock {A use of complex probabilities in the theory of stochastic
  processes}.
\newblock {\em Mathematical Proceedings of the Cambridge Philosophical
  Society}, 51(02):313--319, 1955.

\bibitem{Cox:1955cc}
D.~R. Cox.
\newblock {The analysis of non-Markovian stochastic processes by the inclusion
  of supplementary variables}.
\newblock {\em Mathematical Proceedings of the Cambridge Philosophical
  Society}, 51(03):433, 1955.

\bibitem{Cramer:2013bs}
H.~Cram{\'e}r.
\newblock {Collective risk theory: A survey of the theory from the point of
  view of the theory of stochastic processes}.
\newblock In {\em Collected Works II}, pages 1028--1115. Springer Berlin
  Heidelberg, Berlin, Heidelberg, 2013.

\bibitem{escher1932probability}
F.~Escher.
\newblock On the probability function in the collective theory of risk.
\newblock {\em Skand. Aktuarie Tidskr.}, 15:175--195, 1932.

\bibitem{gerber1994option}
H.~Gerber and E.~Shiu.
\newblock Option pricing by esscher transforms.
\newblock {\em Transactions of the Society of Actuaries}, 46, 1994.

\bibitem{Jensen:2011bz}
A.~Jensen.
\newblock {\em A Distribution Model, Applicable to Economics}.
\newblock Copenhagen, 1954.

\bibitem{NeutsPH}
M.~Neuts.
\newblock Probability distributions of phase type.
\newblock In {\em Liber Amicorum Prof. Emeritus H. Florin}, pages 173--206.
  Department of Mathematics, University of Louvain, Belgium, 1975.

\bibitem{Neuts}
M.~Neuts.
\newblock {\em Matrix-geometric Solutions in Stochastic Models: An Algorithmic
  Approach}.
\newblock Dover Publications, 1981.

\bibitem{o1990characterization}
C.~A. O’Cinneide.
\newblock Characterization of phase-type distributions.
\newblock {\em Stochastic Models}, 6(1):1--57, 1990.

\bibitem{vollering2020markov}
F.~V{\"o}llering.
\newblock Markov process representation of semigroups whose generators include
  negative rates.
\newblock {\em Electronic Communications in Probability}, 25, 2020.

\end{thebibliography}
\end{document}